\documentclass[12pt]{amsart}

\usepackage[english]{babel}
\usepackage[utf8]{inputenc}
\usepackage{array}
\usepackage{tikz}
\usepackage{setspace}
\usepackage{graphicx}
\usepackage{subcaption}
\usepackage{subfig}
\usepackage{tcolorbox}
\usepackage{amssymb}
\usepackage{amsmath}
\usepackage{mathtools}
\usepackage{bbm}
\usepackage{cancel}
\usepackage{thmtools} 
\usepackage{thm-restate}
\usepackage{verbatim}
\usepackage{url}
\usepackage{tikz-cd}
\usetikzlibrary{positioning}
\usepackage{pgfplots}

\newtheorem{theorem}{Theorem}[section]
\newtheorem*{theorem*}{Theorem}
\newtheorem{lemma}[theorem]{Lemma}

\newtheorem{prop}[theorem]{Proposition}
\newtheorem{definition}[theorem]{Definition}

\newtheorem*{cons}{The map $T_y$}
\newtheorem{rem}{Remark}

\usepackage[letterpaper,
            bindingoffset=0.1in,
            left=1.2in,
            right=1.2in,
            top=1in,
            bottom=1in,
            footskip=.25in]{geometry}

\pgfplotsset{compat=1.18}

\renewcommand{\d}[1]{\ensuremath{\operatorname{d}\!{#1}}}
\newcommand{\dbar}{\overline{d}}
\newcommand{\M}{\mathcal{M}_\mathrm{erg}}

\newcommand{\N}{\mathcal{N}_c}

\newcommand{\uboxed}[1]{%
  \makebox[0pt]{\fbox{$\scriptstyle\mathstrut#1$}}%
}

\numberwithin{equation}{section}
\numberwithin{figure}{section}

\title[Entropy Density of Ergodic Nonadapted Measures]{Entropy Density of Ergodic Nonadapted Measures for Markov Interval Maps}
\author{\L{}ukasz Krzywo\'{n}}

\thanks{DATE: \today}
\begin{document}
\maketitle

\begin{abstract}
Given a uniformly expanding transitive Markov interval map, we show that within the set of ergodic measures the set of nonadapted ergodic measures is residual in with respect to the topology induced by the $\dbar$-metric.
This set of measures is also shown to be path connected in many cases.
\end{abstract}

\section{Introduction}\label{Intro} 

\subsection{Adapted and Nonadapted Measures for Interval Maps}
We consider uniformly hyperbolic dynamical systems with singularities and discontinuities. 
Such systems have many invariant measures.
``Adapted" measures play a central role in understanding these systems and are well understood. 
These measures do not give too much weight to neighborhoods of these special points.
For many systems, it is unknown ``how many" of the invariant measures are adapted.

For uniformly expanding Markov interval maps, the author showed, \cite{LK}, that the unique measure of maximal entropy (MME) is adapted in some cases and nonadapted in others.
We will investigate the topological structure of the space of nonadapted measures with respect to the weak$^*$ and $\dbar$ topologies.
The properties we obtain are related to and partially motivated by the properties of ergodic measures described in \cite{Sig}.
The interval maps we study may have other discontinuities but we will be considering adaptedness with respect to a periodic point.
In this setting, the question remains whether every value less than the topological entropy could be obtained as the entropy of a nonadapted invariant measure and also whether the set of nonadapted measures is topologically meagre. 
We answer these questions with the following theorem.

\begin{theorem}\label{thm:one}
Let $f \colon I=[0,1] \to I$ be a piecewise $C^1$, uniformly expanding, transitive Markov map with a periodic point, $c$.  
Then, the following hold.
\begin{enumerate}
    \item With respect to the $\dbar$-topology, the set of nonadapated ergodic measures, $\N$, is residual in the set of ergodic measures, $\M(I,f)$. \\
     \item With respect to the weak$^*$-topology, $\N$ is residual and entropy dense in the set of invariant measures, $\mathcal{M}(I,f)$. \\
    \item If $c$ is contained in an interval coded by a ``safe" symbol (see Definition \ref{safe}), then $\N$ is path-connected. Thus, for all $h \in [0,h_\mathrm{top}(f))$ there exists a nonadapted ergodic measure, $\mu \in \N$, such that $h_{\mu}=h$.
\end{enumerate}
    
\end{theorem}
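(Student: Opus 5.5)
The plan is to pass to symbolic dynamics and build measures of prescribed type by concatenating orbit segments. Since $f$ is a transitive, uniformly expanding Markov map, the natural coding identifies $(I,f)$, up to a countable set carrying no non-atomic invariant measure, with a mixing (or, after passing to a cyclic decomposition, irreducible) subshift of finite type $\Sigma_A$. Ergodic measures on $I$ correspond to ergodic shift-invariant measures, and the periodic point $c$ corresponds to a periodic sequence. Adaptedness of $\mu$ with respect to $c$ means $\int -\log |x-c|\,d\mu<\infty$. Because $f$ is uniformly expanding, $|x-c|$ is comparable to $\lambda^{-n(x)}$, where $n(x)$ is the length of the longest agreement between the coding of $x$ and that of $c$. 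So $\mu$ is nonadapted precisely when $\sum_n n\,\mu([c_0\dots c_{n-1}]) = \infty$, equivalently when the return-depth to $c$ is not $\mu$-integrable. This turns every statement into a combinatorial statement about cylinder weights near the orbit of $c$.

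For (1), write $\N=\bigcap_{M}\{\mu\in\M : \int \min(-\log|x-c|,M)\,d\mu > K_M\}$ with suitable $K_M\to\infty$, or more directly $\N=\bigcap_K U_K$ with $U_K=\{\mu: \int \phi\,d\mu>K\}$, where $\phi=-\log|x-c|$. The map $\mu\mapsto\int\min(\phi,M)\,d\mu$ is $\dbar$-continuous, since $\dbar$-closeness forces closeness of cylinder frequencies of bounded length. Taking a supremum over $M$ shows $U_K$ is $\dbar$-open. For density, fix $\mu\in\M$ and $\varepsilon>0$. Choose a $\mu$-generic point and modify its coding along a sparse set of positions of density $\varepsilon$, inserting long blocks $c_0\dots c_{L_j-1}$ of the periodic word together with connecting words given by the mixing of $A$. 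The block lengths $L_j$ grow so that the average of $\phi$ diverges, while the density of altered symbols stays below $\varepsilon$. To ensure ergodicity, the resulting measure should be realized as a limit of periodic measures or via a Bernoulli-type random concatenation scheme. Then every $U_K$ is open and dense in $\M$, and $\N$ is residual by Baire: $\M$ is $\dbar$-complete, being closed in the complete space of invariant measures under $\dbar$.

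For (2), use the same sets in the weak$^*$ topology. $U_K$ is still open because $\mu\mapsto\int\phi\,d\mu$ is lower semicontinuous ($\phi$ is continuous into $[0,\infty]$ on the shift). Density holds because ergodic measures are weak$^*$ dense, and the $\dbar$-perturbation above is weak$^*$ small. Residuality in $\mathcal M(I,f)$ follows, since the generic invariant measure is ergodic. For entropy density, start from the standard fact that ergodic measures on SFTs are entropy dense, approximating by periodic or horseshoe measures. Then apply the $\varepsilon$-insertion, which changes entropy by $O(\varepsilon\log\#\text{alphabet}) + H(\varepsilon)$, as is standard for $\dbar$-perturbations.

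For (3), the safe-symbol hypothesis is what allows the insertion to be performed uniformly. It should guarantee that the block around $c$ can be spliced in and out without killing entropy, so that we can interpolate. Given $\mu_0,\mu_1\in\N$, connect them through concatenation measures $\mu_t$ built by alternating generic blocks of $\mu_0$ and $\mu_1$ in proportions $(1-t,t)$, while always keeping a divergent sequence of deep visits to $c$. I would use $\dbar$-continuity of these constructions in $t$, and entropy varies continuously along the path. Connecting a nonadapted measure of entropy near $h_{\mathrm{top}}$ with one of entropy $0$ (a nonadapted measure supported near the orbit of $c$, e.g.\ on a low-complexity subshift built from $c$-blocks) then yields every $h\in[0,h_{\mathrm{top}})$ by the intermediate value theorem. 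I expect the main obstacle to be proving ergodicity and nonadaptedness simultaneously for the insertion and path constructions. The natural route is Kakutani-tower or random concatenation constructions with growing block lengths, and controlling $\dbar$-continuity along the path is where the safe-symbol condition must be used.
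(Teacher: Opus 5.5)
Your $G_\delta$ argument in (1) and your treatment of (2) are essentially the paper's: truncations of the singular potential are continuous on the shift, so the superlevel sets are weak$^*$-open and hence $\dbar$-open, and you then use genericity and entropy density of ergodic measures together with $\dbar$-continuity of entropy.

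\textbf{Part (3).} The genuine gap is here. Interpolating directly between arbitrary $\mu_0,\mu_1\in\N$ by alternating generic blocks gives no control of $\dbar$-continuity or of nonadaptedness.
\begin{itemize}
\item If the blocks are drawn independently, the measures need not converge in $\dbar$ to $\mu_0$ as $t\to 0$. Independent concatenations of long blocks $\dbar$-approximate an ergodic process only when it is Bernoulli.
\item Splicing $\mu_1$-pieces into a $\mu_0$-path, with connecting words, cuts exactly the long runs of $w$ that make $\mu_0$ and $\mu_1$ nonadapted. Adding extra deep visits to $c$ to compensate moves the endpoints.
\item The role you give the safe symbol is not the one that matters.
\end{itemize}
The missing idea is to use $\delta_c$ as a hub. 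It is ergodic and lies in $\N$ because it charges $c$, so it suffices to join each $\nu\in\N$ to $\delta_c$. Since $0$ is safe, any set of coordinates of an admissible sequence can be replaced by $0$ with no connecting words. So one takes $\mu_r$ to be the law of $x\cdot y$, with $x\sim\nu$ and $y$ i.i.d.\ with $y_i=0$ with probability $r$ (Construction B).
\begin{itemize}
\item Overwriting by the symbol of the interval adjacent to $c$ only pushes mass toward $c$. Hence $\int b\,d\mu_r\ge\int b\,d\nu=\infty$ for every $r$.
\item A maximal coupling of the two Bernoulli sequences gives $\dbar(\mu_r,\mu_s)\le|r-s|$.
\end{itemize}
The entropy statement then follows from the intermediate value theorem along the path from an element of $\N$ with entropy close to $h_{\mathrm{top}}$ to $\delta_c$. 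No separate zero-entropy subshift is needed.

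\textbf{The density step in (1).} This is where the real work lies, and it is only sketched; some of your options fail.
\begin{itemize}
\item A limit of periodic measures is either a weak$^*$ limit, which gives neither ergodicity nor $\dbar$-control, or a $\dbar$-limit, which has zero entropy and so cannot approximate a positive-entropy $\nu$.
\item A weak$^*$ limit of empirical measures along one modified orbit need not be ergodic or nonadapted. Since $\mu\mapsto\int\phi\,d\mu$ is only lower semicontinuous, divergent Birkhoff averages of $\phi$ along the orbit do not pass to the limit.
\end{itemize}
What works (Construction A) is to overwrite a $\nu$-typical sequence along the zero-runs of a stationary renewal process chosen independently of it. Then the pushforward of $\tilde\nu\times\mu_p$ under $(\tilde x,\mathbf{y})\mapsto(\tilde x,T_{\mathbf{y}}\tilde x)$ is itself a joining. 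So $\dbar$ is bounded by the density $(\mathbb{E}(p)-1)/\mathbb{E}(p)$ of overwritten sites. ``Growing block lengths'' must be replaced by a tail condition on the gap law: finite mean but infinite second moment. The paper takes $p_1$ close to $1$ and $p_k\asymp k^{-3}$ beyond $K$, which yields $\mu([w^k])\gtrsim 1/k$.

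\textbf{Two smaller corrections.}
\begin{itemize}
\item Your nonadaptedness criterion has a spurious factor. By the layer-cake formula it should read $\sum_n\mu([c_0\dots c_{n-1}])=\infty$, i.e.\ $\int n(x)\,d\mu=\infty$. Your version $\sum_n n\,\mu([c_0\dots c_{n-1}])=\infty$ would count a measure with $\mu([c_0\dots c_{n-1}])\asymp n^{-3/2}$ as nonadapted, although then $\int n(x)\,d\mu<\infty$.
\item $\phi$ should be the one-sided $b$ of Definition \ref{adaptedmeasure}, since $\N$ is defined by adaptedness from the right.
\end{itemize}
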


As an example, Statement (3) of Theorem \ref{thm:one} applies to maps conjugate to the doubling map.
The objects and tools we use are introduced and defined in Section 2.
The proof for Theorem \ref{thm:one} is given in Section \ref{One:Proof} and relies on a construction in Section \ref{meascons} and $\dbar$-estimates given in Propositions \ref{constrA} and \ref{constrB}.

Interval maps with adapted and nonadapted invariant measures are studied in \cite{YL}, \cite{ND}, \cite{OV}, and \cite{PF}.
In particular, Pedreira and Pinheiro \cite{PF}, studied expanding Lorenz maps with a periodic singularity and constructed many nonadapted measures with varying entropy.
In fact, they showed a dense set of entropies could be obtained by nonadapted invariant measures.
However, they did not consider $\dbar$ properties or show that a full interval of entropies could be obtained by nonadapted invariant measures.

\subsection{Connection to Dispersing Billiards}
Dispersing (Sinai) billiards are examples of hyperbolic dynamics with discontinuities.
The discontinuities are also one-sided singularities in the sense that the derivative of the billiard map is unbounded near the singularity.
Baladi and Demers, \cite{BD}, proved that there exists a unique adapted MME for all billiard tables satisfying a certain inequality, called sparse recurrence, relating the topological entropy to recurrences near the singularities.
Climenhaga, Demers, Lima, and Zhang, \cite{CDLZ}, showed that certain dispersing billiards admit positive entropy nonadapted invariant measures. 
Also, Climenhaga and Day, \cite{VD}, showed that every dispersing billiard has a unique MME, but concluded nothing about its adaptedness.
To the best of our knowledge, it is not known if there exists a dispersing billiard table with a nonadapted MME, or an entropy-dense set of nonadapted measures.
In \cite{LK}, the author gave many examples of nonadapted MMEs for Markov interval maps.
It would be interesting to obtain results similar to the interval map setting for dispersing billiards.

\section{Preliminary Definitions}\label{PrelimDefEx}
\subsection{Definitions}
    Let $I = [0,1]$ and let $f \colon I \to I$ be a piecewise $C^1$ uniformly expanding transitive Markov map with a right periodic point (see Definition \ref{period}) at $c \in [0,1]$ of period $N$, where $c$ is a left endpoint of a subinterval.
    Let $\ell$ be the length of the maximal interval of $C^1$ monotonicity of $I$ with $c$ as a left endpoint.
    The point $c$ may be periodic from only one side, which we denote right periodic or left periodic.
    \begin{definition}\label{period}
        A point, $c \in I$ is right periodic with period $N\in \mathbb{N}$ if
    \begin{enumerate}
        \item $\lim_{x \to c^+}f^N(x) = c$,
        \item there exists a $\delta >0$ such that $f^N(c,c+\delta) \subset (c, c+\ell)$,
        \item $N$ is the least natural number satisfying $(1)$ and $(2)$.
    \end{enumerate}
    \end{definition}
    For simplicity, we will only consider right periodic points.
    See Figure \ref{per} for examples.
    
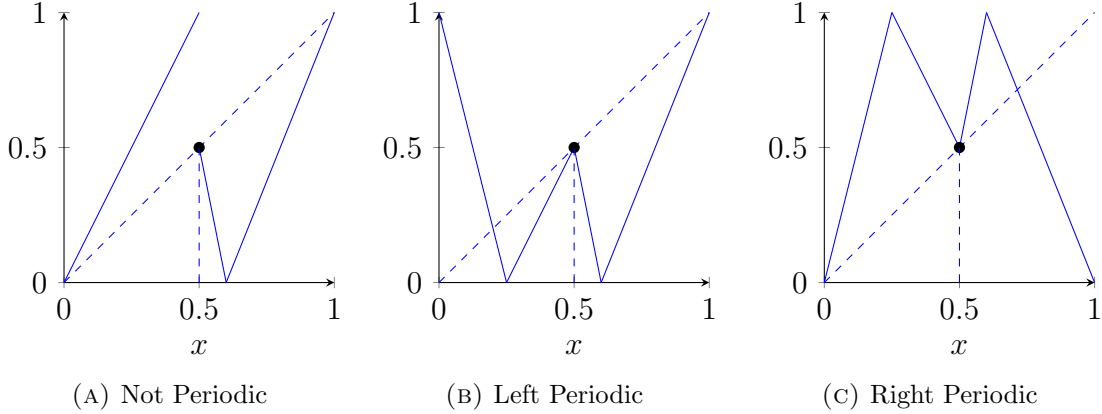
\begin{figure}
\centering
  \begin{subfigure}{.3\linewidth}
    \centering
     \begin{tikzpicture}
    \begin{axis}[axis lines = left, axis equal image, width = 6cm, xlabel = \(x\), xtick={0,.5,1},
    ytick={0,.5,1}, clip = false,]

    \addplot[color=blue, mark=none,smooth,] coordinates{(0.5,0.5) (.6,0)};
    \addplot [color=blue, mark = none, smooth,] coordinates{(.6, 0) (1,1)};
    \addplot[color=blue, mark=none,dashed,] coordinates{(0,0) (1,1)};
    \addplot[color=blue, mark=none,smooth,] coordinates{(0,0) (.5,1)};
    \node[circle,fill,inner sep=1.5pt] at (axis cs:.5,.5) {};
    \addplot[color=blue, mark=none,dashed,] coordinates{(.5,.5) (.5,0)};
    \end{axis}
    \end{tikzpicture}
    \caption{Not Periodic}
    \end{subfigure}%
    \hspace{1em}%
  \begin{subfigure}{.3\linewidth}
    \centering
   \begin{tikzpicture}
\begin{axis}[axis lines = left, axis equal image, width = 6cm, xlabel = \(x\), xtick={0,.5,1},
    ytick={0,.5,1}, clip = false,]

    \addplot[color=blue, mark=none,smooth,] coordinates{(0.5,0.5) (.6,0)};
    \addplot [color=blue, mark = none, smooth,] coordinates{(.6, 0) (1,1)};
    \addplot[color=blue, mark=none,dashed,] coordinates{(0,0) (1,1)};
    \addplot[color=blue, mark=none,smooth,] coordinates{(0,1) (.25,0)};
    \addplot[color=blue, mark=none,smooth,] coordinates{(.25,0) (.5,.5)};
    \node[circle,fill,inner sep=1.5pt] at (axis cs:.5,.5) {};
    \addplot[color=blue, mark=none,dashed,] coordinates{(.5,.5) (.5,0)};
\end{axis}
\end{tikzpicture}
\caption{Left Periodic}
\end{subfigure}
\hspace{1em}%
  \begin{subfigure}{.3\linewidth}
    \centering
   \begin{tikzpicture}
\begin{axis}[axis lines = left, axis equal image, width = 6cm, xlabel = \(x\), xtick={0,.5,1},
    ytick={0,.5,1}, clip = false,]

    \addplot[color=blue, mark=none,smooth,] coordinates{(0.5,0.5) (.6,1)};
    \addplot [color=blue, mark = none, smooth,] coordinates{(.6, 1) (1,0)};
    \addplot[color=blue, mark=none,dashed,] coordinates{(0,0) (1,1)};
    \addplot[color=blue, mark=none,smooth,] coordinates{(0,0) (.25,1)};
    \addplot[color=blue, mark=none,smooth,] coordinates{(.25,1) (.5,.5)};
    \node[circle,fill,inner sep=1.5pt] at (axis cs:.5,.5) {};
    \addplot[color=blue, mark=none,dashed,] coordinates{(.5,.5) (.5,0)};
\end{axis}
\end{tikzpicture}
\caption{Right Periodic}
\end{subfigure}
    \caption{Examples of types of periodic discontinuities}%
    \label{per}%
\end{figure}

\begin{definition}\label{adaptedmeasure}
Let $b(x) \colon I \to \mathbb{R}$ be defined by $b = |\log(x-c)|$ for $x \in (c, c+\ell)$ and $b(x) = 0$ for $x \notin (c, c+\ell)$. 
    An $f$-invariant Borel probability measure $\mu$ is called $c$-adapted or adapted with respect to $c$ if $\mu(\{c\}) = 0$ and $\int_I b(x)\d \mu(x) < \infty$, and nonadapted otherwise.
\end{definition}

\begin{rem}
    Notice we are only considering adaptedness or nonadaptedness from the right side. It is possible to also consider from the left side. 
    For the purpose of the results we prove, considering one-sided adaptedness is sufficient.
    That is because typically, adaptedness requires being adapted with respect to each point from both sides.
\end{rem}

\begin{definition}\label{def:M}
 Let $X$ be a compact metric space and $T \colon X \to X$ be a map.
 Let $\mathcal{M}(X,T)$ be the set of $T$-invariant Borel probability measures on $X$ and $\M(X,T) \coloneq \{ \mu \in \mathcal{M}(X,T) : \mu \text{ is $T$-ergodic}\}$.
\end{definition}

\begin{definition}\label{def:Non}
    Let $\mathcal{N}_c \coloneq \{ \mu \in \M(I,f): \mu \text{ is nonadapted with respect to } c\}$ denote the set of nonadapted ergodic measures. 
\end{definition}

\begin{definition}\label{edendef}
    A set $V \subset \mathcal{M}$ is called entropy dense if for all $\mu \in \mathcal{M}$, given any weak$^*$-neighborhood $U$ of $\mu$ and $\epsilon > 0$, there exits a $\nu \in V$ such that $\nu \in U$ and $|h_{\mu}-h_{\nu}| < \epsilon$.
\end{definition}

Let $\mathcal{A}$ be an alphabet of $J$ symbols, $\Sigma \coloneq \mathcal{A}^{\mathbb{Z}}$, $\Sigma^+\coloneq \mathcal{A}^{\mathbb{N}}$, and $\sigma \colon \Sigma \to \Sigma$ be the left shift. 
That is $\sigma(\omega)_i = \omega_{i+1}$ for all $\omega \in \Sigma$.
Let $(\Sigma_A,\sigma)$ be the two-sided subshift of finite type (SFT) with $J \times J$ adjacency matrix $A$ and $(\Sigma^+_A, \sigma)$ be the one-sided SFT with adjacency matrix $A$.
There is a natural measure theoretic isomorphism between the SFTs $(\Sigma_A,\sigma)$  and $(\Sigma^+_A, \sigma)$.
 This can be seen by considering cylinders.
 Thus, we will freely pass between them.
For the following definitions, let $\nu_1, \nu_2 \in \mathcal{M}(\Sigma,\sigma)$.

\begin{definition}\label{joining}
    A joining, $m$, of $\nu_1$ and $\nu_2$ is a $(\sigma \times\sigma)$-invariant Borel probability measure on $\Sigma \times \Sigma$ such that the projection maps induce the original measures. 
    That is, $(\pi_1)_*m = \nu_1$ and $(\pi_2)_*m=\nu_2$, where $\pi_i$ is the projection onto the $i$-th coordinate for $i = 1,2$.
\end{definition}
We are now ready to define the $\dbar$-metric.
\begin{definition}\label{dbardef}
    Let $\mathcal{J}(\nu_1,\nu_2)$ be the set of joinings of $ \nu_1$ and $\nu_2.$ 
    For all $x,y \in \Sigma$, let $\delta(x,y)$ be $1$ if $x_1 \neq y_1$ and $0$ otherwise. Then,
    \begin{align*}
        \overline{d}(\nu_1,\nu_2) &\coloneq \inf_{\mu \in \mathcal{J}(\nu_1,\nu_2)} \int \delta(x,y)\d \mu(x,y) \\
        &= \inf_{\mu \in \mathcal{J}(\nu_1,\nu_2)} \mu\left(\{(x,y) \in \Sigma \times \Sigma : x_1 \neq y_1\}\right).
    \end{align*}
\end{definition}

The $\dbar$-metric was introduced by Ornstein in \cite{OD} to help prove entropy is a complete invariant for Bernoulli shifts.
The following facts can be found in \cite[Section 7.4]{DR}.
There exists a metric $\zeta$ that induces the weak$^*$-topology and satisfies $\dbar \geq \zeta$.
Also, the entropy map $\mu \mapsto h(\mu)$ is $\dbar$-continuous.
Thus, if a set is $\dbar$-dense, then it will be entropy dense.

\subsection{Coding}\label{coding}
Our goal is to describe the space $\mathcal{M}(I,f)$.
To do this, we will use the space $\mathcal{M}(\Sigma^+_A,\sigma)$.
Here, $A$ is the adjacency matrix for the coding.
There exists a natural coding map $\theta \colon \tilde I \to \Sigma^+_A$ where $\tilde I \subset I $ excludes the preimages of the endpoints of the subintervals (see \cite[Section 4]{LK} or \cite{PY} for details). 
The map $\theta$ satisfies $\theta \circ f = \sigma \circ \theta$ on the subsets $\tilde I$ and $\theta(\tilde I)$.
We extend the inverse of $\theta$ to $\Sigma^+_A$ to obtain $\pi \colon \Sigma^+_A \to I$.
For each $\mu \in \mathcal{M}(I,f)$ there exists a $\nu \in \mathcal{M}(\Sigma^+_A,\sigma)$ such that $\mu = \pi_*\nu$ and $h_{\mu} = h_{\nu}$.
Furthermore, for any $\mu \in \M(I,f)$ with $h_{\mu}>0$, we have $\mu(\tilde{I})=1$ and therefore $\nu\coloneq \theta_*\mu$ is the only measure on $\Sigma_{A}^+$ such that $\mu = \pi_*\nu$.

\begin{definition}\label{safe}
    For an SFT, $(\Sigma_A, \sigma)$, with adjacency matrix $A = [a_{ij}]$, the $i$th symbol is called safe if $a_{ji} =1 = a_{ij}$ for all $0 \leq j \leq J-1$. 
\end{definition}
We will need a mechanism to show when a constructed measure is nonadapted. 
To that end let us define a useful sequence determined by the interval map $f$.
First, let $D_k \coloneq \pi([w^k]) \subset I$, where $w$ is the word of length $N$ that codes the periodic orbit of $c$.
That is, $\pi(www....)=c$.
Also, let 
\begin{equation}\label{beak}
    b_k \coloneq \min \{b(x) : x \in D_k\}. 
\end{equation}

\begin{lemma}\label{bkexp}
    There exists an $a >0 $ such that $b_k \geq ak$ for all $k \in \mathbb{N}$.
\end{lemma}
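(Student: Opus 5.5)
The plan is to show that $D_k$ is an interval with left endpoint $c$ whose length decays exponentially in $k$. The bound $b(x) = |\log(x-c)| \geq$ const $\cdot k$ on $D_k$ then follows immediately.

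\textbf{Step 1: location of $D_k$.} The cylinder $[w^k]$ consists of sequences whose first $kN$ symbols follow the periodic itinerary of $c$. Its image $\pi([w^k])$ is a closed interval (closure of a Markov cylinder) that contains $c = \pi(www\ldots)$. We use that $c$ is right periodic and a left endpoint of the first subinterval of $w$. Consequently the cylinder of $w$ on the right side of $c$ begins at $c$, and inductively $D_k \subset [c, c+\ell)$ with $c$ its left endpoint. More precisely, $f^N$ restricted to $D_1$ is a $C^1$ monotone (increasing, by condition (2) of Definition \ref{period}) expanding branch fixing $c$ from the right. Then $D_{k}$ is the preimage of $D_{k-1}$ under this branch intersected with $D_1$, so $D_k = [c, c+|D_k|]$.

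\textbf{Step 2: exponential decay.} Let $\lambda > 1$ be the uniform expansion constant, so $|(f^N)'| \geq \lambda^N$ on the interior of each branch. Since $f^{(k-1)N}$ maps $D_k$ diffeomorphically (on its interior) into $D_1 \subset I$, the mean value theorem gives
\[
|D_k| \leq \lambda^{-(k-1)N}|D_1| \leq \lambda^{-(k-1)N}.
\]
Hence for $x \in D_k$ with $x \neq c$ we have $0 < x-c \leq \lambda^{-(k-1)N}$. Taking logarithms, $b(x) = -\log(x-c) \geq (k-1)N\log\lambda$. At $x=c$ itself $b$ is $0$ by definition, or excluded since $c\notin(c,c+\ell)$. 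I would handle this by interpreting the minimum over $D_k\setminus\{c\}$ (equivalently an infimum, which is at least as large), consistent with how $b_k$ is used.

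\textbf{Step 3: adjust constants.} The bound $(k-1)N\log\lambda$ is linear in $k$ but vanishes at $k=1$. To get $b_k \geq ak$ for all $k \geq 1$, I handle $k=1$ separately. If $\ell<1$ or $|D_1|<1$, then $b_1 \geq -\log|D_1| > 0$. Otherwise the uniform bound $|D_k|\le \lambda^{-kN}|f^{N}(D_1)|$, where $|f^N(D_1)| \leq 1$, already gives $b_k\ge kN\log\lambda$ directly; this is the cleaner route. So I would use $f^{kN}(D_k)\subset I$ to get $|D_k|\leq\lambda^{-kN}$, and take $a = N\log\lambda$.

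\textbf{Main obstacle.} The delicate point is Step 1: verifying that $D_k$ lies to the right of $c$ and that $f^{kN}$ is injective and $C^1$ on its interior. Both are needed so that the mean value theorem applies despite discontinuities of $f$. This uses the Markov property together with Definition \ref{period}(2).
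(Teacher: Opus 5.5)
Your argument is correct and is essentially the paper's: uniform expansion of $f^{N}$ along the branch fixing $c$ from the right, the mean value theorem, and taking logarithms give $b_k \geq kN\log\lambda$. The difference is cosmetic. You bound the length of $D_k$ via $|f^{kN}(D_k)|\leq 1$, whereas the paper follows an individual $x\in(c,c+\ell)$ and uses $f^{jN}(x)<c+\ell$ up to $j=k$; your version avoids needing the $kN$-th iterate itself to stay in $(c,c+\ell)$, which membership in $D_k$ does not guarantee. Your restriction to $x\neq c$ matches the paper's implicit restriction to $x\in(c,c+\ell)$.
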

\begin{proof}
  Let $x \in (c, c+\ell)$ be coded by $\omega \in [w^k]$. 
That is, $c  < f^{jN}(x) <c+\ell $ for all $0 \leq j \leq k$. Since $f$ is uniformly expanding, this implies $(x-c)\chi^{kN} < \ell$ for some $\chi > 1$.
Thus, $\log\left((x-c)\chi^{k}\right) < \log(\ell)$.
So, $\log(x-c) +k\log(\chi) < \log(\ell)$, and
\[-\log(x-c) > k\log(\chi)-\log(\ell).\]
Since $\ell < 1$, this implies $b(x) = |\log(x-c)| > kN\log(\chi)$.   
\end{proof}

\section{Constructions of Measures}\label{meascons}

We now turn to the construction of measures we will need for our proof, which is partially motivated by Anthony Quas' coupling and splicing technique from \cite{AQ}.
The main idea of the following constructions is to, for a given ergodic measure, take sequences and ``overwrite" strings of symbols with copies of the periodic word $w$. 
This will be done in a controlled way so that we can construct $\dbar$-nearby measures that are nonadapted.
We will fix a countable set $S$ and let $P$ be the set of probability vectors on $S$.
That is, for ${p} \in P$, the components, $p_i$, satisfy $0 \leq p_i \leq 1$ for all $i \in S$ and $\sum_{i \in S} p_i = 1$.
Our constructions will utilize $ p$ to identify a Bernoulli measure on $\Lambda \coloneq S^\mathbb{Z}$ or $S^\mathbb{N}$ which will control the frequency of overwritten strings.

\subsection{Construction A}\label{Cons:one}

Let $\nu \in \mathcal{M}(\Sigma_A,\sigma)$, the two-sided SFT with $A$ as an adjacency matrix.
We are assuming that $\Sigma_A$ is transitive. 
Hence, we may use the cyclic structure of transitive SFTs \cite[Section 4.5]{LinMar} to decompose $\Sigma_A$ into $n$ disjoint sets $\Sigma^{(A)}_i$, $1 \leq i \leq n$, labeled such that $\Sigma^{(A)}_1$ corresponds to the set that includes the periodic sequence, $...ww\star www...$\footnote{The $\star$ indicates the following word to the right starts in the $0$th position}, coding the orbit of $c$.
 In fact, $\sigma$ cyclically permutes the sets $\Sigma^{(A)}_i$, and $(\Sigma^{(A)}_i,\sigma^n)$ is mixing.
 Because $\Sigma^{(A)}_i$ is invariant under $\sigma^n$, the length of the word $w$, $N$, is a multiple of $n$. That is, $N = Ln$ for some $L \in \mathbb{N}$.
 
 It will be convenient to work with $(\Sigma^{(A)}_i, \sigma^n)$ as an SFT.
 Considering words of length $n$ in $\mathcal{A}$ as symbols, that is, considering words in $\mathcal{A}^n$, we have the SFTs $(\Sigma_{A_i}, \tilde\sigma)$.
 Let $A_i$ be the adjacency matrix for the mixing SFT, $(\Sigma_{A_i}, \tilde\sigma)$, that corresponds to $(\Sigma^{(A)}_i, \sigma^n)$. 
 There is a natural projection $\tilde \pi \colon \Sigma_{A_1} \to \Sigma^{(A)}_1$ such that $\tilde \pi (\tilde x) = x$. That is, if $x = (...\star x_0x_1...x_n...)$, $\tilde x_0 = x_0...x_n$.
 
\begin{center}
\begin{tikzcd}[row sep=small]
(\mathcal{A}^n)^{\mathbb{Z}} \arrow[r, "\tilde \pi"] & \mathcal{A}^{\mathbb{Z}} \\
\Sigma_{A_i}\arrow[r, "\tilde\pi"]\arrow[u, phantom, sloped, "\subset"] & \Sigma^{(A)}_i\arrow[u, phantom, sloped, "\subset"]  \\
\end{tikzcd}  
\end{center}
 
    When a symbol has a tilde, it indicates it refers to the $(\mathcal{A}^n)^{\mathbb{Z}}$ setting.
    There is a correspondence $\gamma$ between $\tilde\sigma$-invariant measures, $\tilde\nu$, on $\Sigma_{A_1}$ and $\sigma$-invariant measures, $\nu$, on $\Sigma_A$.
    The correspondence $\gamma$ is defined as follows, 
    \begin{equation}\label{gam}
      \gamma\tilde\nu = \frac{1}{n} \sum_{i=0}^{n-1} \sigma^i_*\tilde\pi_*\tilde\nu  \quad \text{and} \quad  \gamma^{-1}(\nu) = n\nu|_{\Sigma^{(A)}_1}.
    \end{equation}

   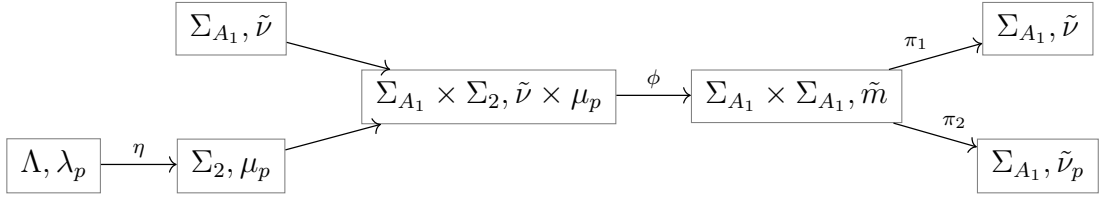
\begin{figure}
\begin{tikzcd}[row sep=tiny, cells={nodes={draw=gray}}]
 & \Sigma_{A_1} , \tilde\nu  \arrow[dr] & & & \Sigma_{A_1} , \tilde\nu  \\
& & \Sigma_{A_1}  \times \Sigma_2 , \tilde\nu \times \mu_p \arrow[r,"\phi"] & \Sigma_{A_1}  \times \Sigma_{A_1} ,\tilde m \arrow[ur, "\pi_1"] \arrow[dr, "\pi_2"]\\
\Lambda, \lambda_p \arrow[r,"\eta"] & \Sigma_2 ,\mu_p\arrow[ur] & & & \Sigma_{A_1} , \tilde\nu_p
\end{tikzcd}
\caption{Construction A}
\label{ConstructionA}
\end{figure}

We will work with the measure $\tilde\nu$ and given a probability vector $p \in P$ construct a related measure $\tilde\nu_p$ on $\Sigma_{A_1}$ and then, via the correspondence, obtain a measure $\mu_{\nu,  p} = \gamma\tilde\nu_p$ on $\Sigma_A$.
See Figure \ref{ConstructionA} for a schematic of the construction.

\begin{lemma}\label{dbarcont}
    Let $\tilde\nu_1, \tilde\nu_2 \in \M(\Sigma_{A_1})$. Then, $\dbar(\gamma\tilde\nu_1, \gamma\tilde\nu_2) \leq \dbar(\tilde\nu_1, \tilde\nu_2).$
\end{lemma}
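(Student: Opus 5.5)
The idea is to lift joinings: any joining of $\tilde\nu_1,\tilde\nu_2$ on $\Sigma_{A_1}\times\Sigma_{A_1}$ produces, after unpacking blocks and averaging over the $n$ shifts, a joining of $\gamma\tilde\nu_1,\gamma\tilde\nu_2$ whose mismatch probability is no larger. Taking the infimum over joinings then gives the inequality.

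\emph{Step 1: the lift.} Fix an arbitrary joining $\tilde m\in\mathcal{J}(\tilde\nu_1,\tilde\nu_2)$, which is $(\tilde\sigma\times\tilde\sigma)$-invariant. Push it forward to $m' \coloneq (\tilde\pi\times\tilde\pi)_*\tilde m$ on $\Sigma_A\times\Sigma_A$. Since $\tilde\pi\circ\tilde\sigma=\sigma^n\circ\tilde\pi$, the measure $m'$ is $(\sigma\times\sigma)^n$-invariant, and its marginals are $\tilde\pi_*\tilde\nu_1$ and $\tilde\pi_*\tilde\nu_2$. Now set
\[
m \coloneq \frac1n\sum_{i=0}^{n-1}(\sigma\times\sigma)^i_*\, m' .
\]
This is $(\sigma\times\sigma)$-invariant, because applying $\sigma\times\sigma$ cyclically permutes the summands, using $(\sigma\times\sigma)^n_*m'=m'$. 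Projections commute with $\sigma\times\sigma$, so by \eqref{gam} the marginals of $m$ are exactly $\gamma\tilde\nu_1$ and $\gamma\tilde\nu_2$. Hence $m\in\mathcal{J}(\gamma\tilde\nu_1,\gamma\tilde\nu_2)$.

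\emph{Step 2: comparing mismatches.} By the definition of $m$,
\[
m(x_1\neq y_1)=\frac1n\sum_{i=0}^{n-1} m'(x_{1+i}\neq y_{1+i}).
\]
This averages the mismatch over positions $1,\dots,n$. By $(\sigma\times\sigma)^n$-invariance of $m'$, the term at position $n$ equals the term at position $0$, so the sum equals the average over positions $0,\dots,n-1$. These positions are precisely the coordinates read off from the block $\tilde x_0$ under $\tilde\pi$; I index so that $\tilde x_0=x_0\cdots x_{n-1}$, which is the intended convention.

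If $\tilde x_0=\tilde y_0$, then $x_j=y_j$ for all $0\le j\le n-1$. Therefore, for each $j$ in that range,
\[
m'(x_j\neq y_j)\le \tilde m(\tilde x_0\neq\tilde y_0).
\]
Averaging over $j$ gives
\[
\int\delta\,dm\le\tilde m(\tilde x_0\neq\tilde y_0).
\]
The indexing position used in the definition of $\dbar$ (index $1$ versus $0$) is irrelevant by shift-invariance.

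\emph{Step 3: conclusion.} Combining the two steps,
\[
\dbar(\gamma\tilde\nu_1,\gamma\tilde\nu_2)\le\tilde m(\tilde x_0\neq\tilde y_0).
\]
Taking the infimum over $\tilde m\in\mathcal{J}(\tilde\nu_1,\tilde\nu_2)$ gives the claim. In fact the per-position mismatch is bounded by the block mismatch, so the bound is typically strict, but only the inequality is needed. Ergodicity of $\tilde\nu_i$ plays no role.

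The only delicate point is the index bookkeeping in Step 2. The averaged mismatch must be matched exactly with the coordinates of one block, and the shift by one between positions $1,\dots,n$ and $0,\dots,n-1$ is absorbed using the $\sigma^n$-invariance of $m'$.
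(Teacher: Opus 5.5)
Your proof is correct and follows essentially the same route as the paper: you push a joining $\tilde m$ forward under $\tilde\pi\times\tilde\pi$, average over the $n$ shifts to get the joining $\gamma\tilde m$ of $\gamma\tilde\nu_1,\gamma\tilde\nu_2$, and bound each single-coordinate mismatch by the block mismatch $\tilde m(\tilde x_0\neq\tilde y_0)$ before taking the infimum. Your version is a bit more careful than the paper's, since you explicitly verify that the averaged measure is a $(\sigma\times\sigma)$-invariant joining and you handle the index bookkeeping (positions $1,\dots,n$ versus $0,\dots,n-1$, and the convention $\tilde x_0=x_0\cdots x_{n-1}$) through $\sigma^n$-invariance.
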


\begin{proof}
Let \[\Delta_1 \coloneq\{(\tilde x,\tilde y) \in \Sigma_{A_1} \times \Sigma_{A_1} : \tilde x_0 \neq \tilde y_0 \}\] and $\Delta \coloneq\{(x,y) \in \Sigma_A \times \Sigma_A : x_0 \neq y_0 \}$.
Then, $\tilde \pi(\Delta_1) = \bigcup_{i=0}^{n-1} (\sigma \times \sigma)^{-i}(\Delta)\cap\Sigma^{(A)}_1$.
Let $\tilde m$ be a joining of $\tilde\nu_1$ and $\tilde\nu_2$. 
Then, by an abuse of notation, letting $\gamma$ also denote the correspondence between $(\tilde\sigma \times \tilde\sigma)$-invariant measures and $(\sigma\times\sigma)$-invariant measures, $\gamma \tilde m$ is a joining of $\gamma \tilde\nu_1$ and $\gamma \tilde\nu_2$.
Let $\epsilon >0$, then by Definition \ref{dbardef}, there exits a joining $\tilde m_{\epsilon}$ such that $\tilde m_{\epsilon}(\Delta_1) \leq \dbar(\tilde\nu_1, \tilde\nu_2) + \epsilon.$
By definition, $(\sigma\times\sigma)^i_*\tilde \pi_*\tilde m_{\epsilon}(\Delta)\leq \tilde m_{\epsilon}(\Delta_1)$ for each $0 \leq i \leq n-1$.
Hence,
\begin{equation*}
    \dbar(\gamma\tilde\nu_1, \gamma \tilde\nu_2) \leq \gamma \tilde m_{\epsilon} (\Delta) = \frac{1}{n}\sum_{i=0}^{n-1}(\sigma\times\sigma)^i_*\tilde \pi_*\tilde m_{\epsilon}(\Delta) \leq \frac{1}{n}\sum_{i=0}^{n-1} \tilde m_{\epsilon}(\Delta_1) \leq \dbar(\tilde\nu_1, \tilde\nu_2) + \epsilon.
\end{equation*}
Therefore, since $\epsilon$ is arbitrary, $\dbar(\gamma\tilde\nu_1, \gamma\tilde\nu_2) \leq \dbar(\tilde\nu_1, \tilde\nu_2).$
\end{proof}

Let $S = \mathbb{N}$, and ${p} \in P$ such that $\sum_{k \in \mathbb{N}} kp_k < \infty$.
The probability vector ${p}$ defines a Bernoulli measure, $\lambda_p$, on $\Lambda = \mathbb{N}^{\mathbb{Z}}$.
Our goal is to get a measure on $\Sigma_{A_1}$. 
We do this by first defining a function $\eta$ from $\Lambda$ to $[1] \subset \Sigma_2 \coloneq  \{0,1\}^{\mathbb{Z}}$ by
\begin{equation}
    \eta(...n_{-1}\star n_0n_1n_2n_3...) = (...10^{n_{-1}-1}\star10^{n_0-1}10^{n_1-1}10^{n_2-1}10^{n_3-1}1...),
\end{equation}
where the $\star$ indicates that the next symbol to the right is in the zero position.
Note that $\eta(\Lambda)$ does not contain any sequences $\omega$ such that $\omega_i = 0$ for all $i < K$ for some $K \in \mathbb{Z}$.
We use $\lambda_p$ to induce through $\eta$ a shift invariant ergodic measure, $\mu_p,$ on $\Sigma_2$ such that $\mu_p([10^{k-1}1])= r p_k$ for some normalizing constant $r$.
The measure that we get is defined by, for any Borel set $U \subset \Sigma_2,$
\begin{equation}\label{mup}
     \mu_p (U) \coloneq r\sum_{n=0}^{\infty}\sum_{k=n+1}^{\infty}\eta_*\lambda_p\left(\sigma^{-n}(U)\cap[10^{k-1}1]\right),
\end{equation}
where the normalizing constant, $r$, is given by
\begin{equation}\label{normalize}
    r \coloneq \left(\sum_{n=0}^{\infty}\sum_{k=n+1}^{\infty}\eta_*\lambda_p([10^{k-1}1])\right)^{-1}=\left(\sum_{k=1}^{\infty}kp_k\right)^{-1}.
\end{equation}
Here we are using a standard construction with respect to the first return map on $[1]$.
 The details of the construction can be found in \cite[Example 6.4]{LK} or \cite[Section 2.3]{Pet}.

Next, we define a map $\phi \colon \Sigma_{A_1} \times \Sigma_2 \to \Sigma_{A_1} \times \Sigma_{A_1} $ by 
\begin{equation}
    \phi(\tilde x, \textbf{y}) = (\tilde x, T_\textbf{y}(\tilde x)),
\end{equation}
where the map $T_\textbf{y}: \Sigma_2 \to \Sigma_{A_1}$ overwrites pieces of the sequence $\tilde x$ depending on the data of $\textbf{y}$, as we define below.

Since $(\Sigma_{A_1},\tilde\sigma)$ is mixing, there exists a fixed transition length, $t \in \mathbb{N}$, such that any symbol can be connected to any other symbol with a word of length exactly $t$.
Recall the periodic word, $w$ of length $N$ in $\Sigma_A$, coding $c$, corresponds to a word $\tilde w$, of length $L$ in $\Sigma_{A_1}$.
Let us only consider probability vectors $ p$ that are of the form
\begin{equation}
{p} = (p_1,0,...,0, p_{2t+L + 1}, 0 , ..., 0,  p_{2t+2L+1},0,...).
\end{equation}
That is, the nonzero entries of $ p$ are $p_1$ and $p_{2t+kL+1}$ for $k \in \mathbb{N}$.

\begin{tcolorbox}
\begin{cons}
\end{cons}
We will describe a set $G \subset \Sigma_2$ of ``good" sequences such that every string of $0$'s is both greater than $2t$ and finite. 
For $\textbf{y} \in \Sigma_2 \setminus G$, we define $T_\textbf{y} \colon \Sigma_{A_1} \to \Sigma_{A_1}$ to be the identity map.
For $\textbf{y} \in G$ we overwrite strings of symbols with $\tilde w$'s. 
To do this, we will use some notation.

\quad

  Let 
  \begin{equation}
 \mathcal{I} \coloneq \{(a,b) \cap\mathbb{Z} : a, b \in \mathbb{Z},\quad k \in \mathbb{N},\quad  b-a = 2t+kL+1\}
  \end{equation}
  be the set of finite intervals of integers of length $2t+kL+1$ for some $k \in \mathbb{N}$.
For $\textbf{y} \in \Sigma_2,$ let $Z_\textbf{y} =\{i \in \mathbb{Z} : \textbf{y}_i = 0\}$, and $\mathcal{J}_\textbf{y}$ be the set of maximal nonempty intervals of $\mathbb{Z}$ on which $\textbf{y}_i=0$.
That is, $ Z_\textbf{y} = \bigsqcup_{I \in \mathcal{J}_\textbf{y}} I.$
We define
\begin{equation}
    G \coloneq \{ \textbf{y} \in \Sigma_2 : \mathcal{J}_\textbf{y} \subset \mathcal{I}  \}
\end{equation} to be the set of ``good" sequences.
Note that $\mu_p(G) = 1$.

\quad

Let $\textbf{y} \in G$ and fix $\tilde x \in \Sigma_{A_1}$. 
Note that $\mathbb{Z} = Z_{\textbf{y}}^c \sqcup(\bigsqcup_{I \in \mathcal{J}_{\textbf{y}}}I)$.
We define $T_\textbf{y}(\tilde x)_i$ for each $i \in \mathbb{Z}$.
If $i \in Z_\textbf{y}^c$, that is, $\textbf{y}_i = 1$, then we define $T_\textbf{y}(\tilde x)_i = \tilde x_i$.
If $i \in Z_\textbf{y}$, then $i \in I$ for some $I \in \mathcal{J}_\textbf{y}$.
Since the $I$'s are maximal, $\textbf{y}_{a}=1 = \textbf{y}_b$.
By mixing, there exists a word, $\tilde v$, of length $t$, that makes the word, $\tilde x_{a}\tilde v\tilde w$, $A_1$-admissible.
Similarly, $\textbf{y}_{b}=1$, and there exists a word, $\tilde v'$ of length $t$, that makes the word, $\tilde w\tilde v' \tilde x_{b}$, $A_1$-admissible.
Let $k = \frac{1}{L}(b-a-2t-1) \in \mathbb{N}$.
We now define the symbols for $T_\textbf{y}(\tilde x)$ in positions $a$ to $b$ to be the word $\tilde x_{a}\tilde v\tilde w^k\tilde v'\tilde x_{b}$ (see Figure \ref{mapTy}).
\end{tcolorbox}

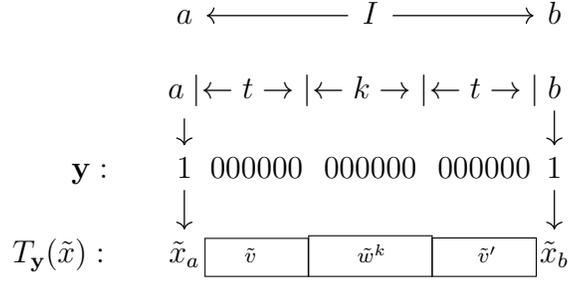
\begin{figure}
\begin{tikzcd}[row sep=small,column sep = -10pt]
& a & & \arrow[ll] I \arrow[rr] & & b \\
 & a \hspace{0.25em}| \arrow[d] & \leftarrow t \rightarrow | &  \leftarrow k \rightarrow | & \leftarrow t \rightarrow | & b \arrow[d] \\
\textbf{y}: & 1 \arrow[d] & 000000 & 000000 & 000000 & 1 \arrow[d]\\
T_\textbf{y}(\tilde x): \hspace{2em} & \tilde x_{a} & \uboxed{\quad \tilde v \hspace{1.40em}} & \uboxed{ \hspace{1.3em} \tilde w^k \hspace{1.2em}}& \uboxed{\hspace{1.2em} \tilde v' \quad} & \tilde x_{b}
\end{tikzcd}
\caption{The map $T_\textbf{y}$ on an interval $I \in \mathcal{J}_\textbf{y} \subset \mathcal{I}$.}
\label{mapTy}
\end{figure}

Let $\tilde m \coloneq \phi_*(\tilde\nu \times \mu_p)$.
We now define a measure $\tilde\nu_p$ on $\Sigma_{A_1}  $ by $\tilde\nu_p \coloneq (\pi_2)_*(\tilde m)$.
That is, $\tilde\nu_p(U) = (\tilde\nu \times \mu_p) (\phi^{-1}\circ\pi_2^{-1}(U))$ for all Borel sets $U\subset\Sigma_{A_1}$.
Finally, we return to $\Sigma_A$ by defining $\mu_{\nu,  p} \coloneq \gamma \tilde\nu_p.$

In the following proposition we will use the notation $c_k \asymp d_k$, by which we mean that there exists an $r \geq1$ and $K \in \mathbb{N}$ such that if  $k\geq K$, then $r^{-1}d_k \leq c_k \leq r d_k$.

 \begin{prop}\label{constrA}
    For all $\nu \in \M(\Sigma_A,\sigma)$ and $ p\in P$, we have $\mu_{\nu,{p}}\in \M(\Sigma_A,\sigma)$ as defined above, and
\begin{enumerate}
    \item if $ p = (1,0,...)$, then $\mu_{\nu,{p}} = \nu$,
    \item $\dbar(\nu, \mu_{\nu,{p}}) \leq \frac{\mathbb{E}(p)-1}{\mathbb{E}(p)}$ where $\mathbb{E}(p) \coloneq \sum_{k=1}^{\infty} kp_k$,
    \item if $p_k \asymp k^{-3}$, then $\pi_*\mu_{\nu,  p} \in \N$.
\end{enumerate}
\end{prop}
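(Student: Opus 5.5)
First I will dispose of ergodicity and the preliminaries, then prove the three items in order. $\tilde\nu$ is $\tilde\sigma$-ergodic: it equals $\gamma^{-1}\nu$, and $(\Sigma^{(A)}_1,\sigma^n)$ is mixing, so ergodicity of $\nu$ passes to $n\nu|_{\Sigma^{(A)}_1}$. The measure $\mu_p$ is the suspension of a Bernoulli measure over the first return to $[1]$, so it is ergodic; it is in fact weakly mixing when $p_1>0$, since then the return times have gcd $1$. Next, $\phi$ commutes with the shift: $T_{\sigma\mathbf y}(\tilde\sigma\tilde x)=\tilde\sigma T_{\mathbf y}(\tilde x)$, provided the connecting words $\tilde v,\tilde v'$ are chosen by a fixed rule depending only on the endpoint symbols $\tilde x_a,\tilde x_b$. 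Hence $\tilde m$ is invariant, and $\tilde\nu_p$, as a factor of $\tilde\nu\times\mu_p$, is invariant. For ergodicity I will use ergodicity of the product $\tilde\nu\times\mu_p$. This holds when $\mu_p$ is weakly mixing. In the degenerate case $p_1=0$ I would instead need to argue that $\mu_p$ has trivial point spectrum relative to $\tilde\nu$, or perturb; I expect to handle it by noting the return times $1$ and $2t+kL+1$ generate gcd $1$ whenever $p_1>0$, and otherwise treat it separately. Factors of ergodic systems are ergodic, and $\gamma$ preserves ergodicity, so $\mu_{\nu,p}\in\M(\Sigma_A,\sigma)$.

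Item (1) is immediate. If $p=(1,0,\dots)$, then $\mu_p$ is the point mass on $\dots111\dots$. Every $\mathbf y$ in its support has no zeros, so $T_{\mathbf y}=\mathrm{id}$ and $\tilde\nu_p=\tilde\nu$. Applying $\gamma$ gives $\mu_{\nu,p}=\gamma\gamma^{-1}\nu=\nu$.

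For item (2), $\tilde m$ is itself a joining of $\tilde\nu$ and $\tilde\nu_p$. Since $T_{\mathbf y}(\tilde x)_0\neq\tilde x_0$ only if $\mathbf y_0=0$,
\[
\dbar(\tilde\nu,\tilde\nu_p)\le \tilde m(\Delta_1)\le \mu_p([0])=1-\mu_p([1]).
\]
From \eqref{mup} and \eqref{normalize}, $\mu_p([1])=r\sum_k p_k=1/\mathbb E(p)$, which gives the bound $(\mathbb E(p)-1)/\mathbb E(p)$. Lemma \ref{dbarcont} then transfers this bound to $\dbar(\nu,\mu_{\nu,p})$.

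For item (3), suppose $p_k\asymp k^{-3}$; I read this as $p_{2t+kL+1}\asymp k^{-3}$. Then $\mathbb E(p)<\infty$, so $\mu_p$ is a genuine probability measure, but $\sum_k k^2p_k=\infty$. Fix a block of zeros of length $2t+kL+1$. Inside it, $T_{\mathbf y}(\tilde x)$ contains $\tilde w^k$, so for each $0\le j<k$ the position at the start of the $j$-th remaining copy lies in $[\tilde w^{k-j}]$. Translating back to $\Sigma_A$, these positions lie in $[w^{k-j}]$, whose $\pi$-image is $D_{k-j}$. By Lemma \ref{bkexp}, $b\ge a(k-j)$ there. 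Summing over one block gives a contribution $\gtrsim \sum_{j}a(k-j)\asymp k^2$. The blocks of length $\approx kL$ occur with frequency proportional to $r p_{2t+kL+1}\asymp k^{-3}$. Hence
\[
\int b\,\d(\pi_*\mu_{\nu,p})\gtrsim \sum_k k^{-3}\cdot k^2=\infty,
\]
so $\pi_*\mu_{\nu,p}$ is nonadapted. To make this rigorous I would compute $\tilde\nu_p([\tilde w^m])\ge \mu_p(\text{position }0\text{ starts a }\tilde w^m\text{ inside a block})$, which is $\gtrsim r\sum_{k\ge m}p_{2t+kL+1}\asymp m^{-2}$. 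I would then apply $\gamma$, losing a factor $1/n$ and shifts of at most $n$. With $b\ge b_{m}$ on $D_m$, this gives
\[
\int b \ge \sum_m b_m\,\bigl(\mu([w^m])-\mu([w^{m+1}])\bigr) = \sum_m (b_m-b_{m-1})\,\mu([w^m]),
\]
and the right-hand side diverges since $b_m\ge am$ and the weights are $\gtrsim m^{-2}$. The summation by parts can be bypassed by the layer-cake bound $\int b\ge \sum_m \mu(D_m\setminus D_{m+1})\,am$. I also need $\pi_*$ to preserve ergodicity, and that the images of the positions lie in $(c,c+\ell)$, which holds by the definition of $D_k$.

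The main obstacle I expect is this last step. It requires careful bookkeeping of the frequency of positions inside $\tilde w$-blocks through the suspension formula \eqref{mup} and the correspondence $\gamma$. A secondary obstacle is the ergodicity argument in the degenerate case $p_1=0$.
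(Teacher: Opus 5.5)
Your items (1) and (2) are correct and essentially the paper's argument. Computing $\mu_p([0])=1-\mu_p([1])=1-1/\mathbb{E}(p)$ is a slightly cleaner way to reach the same bound.

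The gap is in the step you propose to make item (3) rigorous. You bound $\tilde\nu_p([\tilde w^m])$ below by $r\sum_{k\ge m}p_{2t+kL+1}\asymp m^{-2}$. You then assert that $\sum_m (b_m-b_{m-1})\,\mu([w^m])$ diverges because $b_m\ge am$ and the weights are $\gtrsim m^{-2}$. That implication is false. Lemma \ref{bkexp} permits $b_m\asymp m$, which is the typical case (e.g.\ for piecewise linear $f$). Then the series is comparable to $\sum_m m^{-2}<\infty$. Your layer-cake variant fails for the same reason: after Abel summation it is $a\sum_m \mu(D_m)$, and summing your lower bound over $m$ gives only $r\sum_k k\,p_{2t+kL+1}\le r\,\mathbb{E}(p)/L=1/L$. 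A bound of order $m^{-2}$ only sees $\mathbb{E}(p)<\infty$. It does not see the divergence of $\sum_k k^2p_k$, which is what actually produces nonadaptedness.

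The missing factor is the multiplicity of starting positions. A block carrying $k\ge m$ copies of $\tilde w$ contains $k-m+1$ positions at which a copy of $\tilde w^m$ begins. The tower formula \eqref{mup} then gives
\[
\tilde\nu_p([\tilde w^m])\ \ge\ r\sum_{k\ge m}(k-m+1)\,p_{2t+kL+1}\ \asymp\ m^{-1}.
\]
This is the order the paper extracts from $\mu_p([0^{k}])=r\sum_{s\ge1}s\,p_{k+s}$; the factor $s$ there plays the role of your missing multiplicity. With weights $\gtrsim m^{-1}$, summation by parts with $b_m\ge am$ gives partial sums dominating $\sum_{m\le i}a/(m+1)\to\infty$.

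Your per-block heuristic (contribution $\asymp k^2$ per block times frequency $\asymp k^{-3}$) already contains this factor, so you had the right picture. It is the bookkeeping you committed to that loses it.

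Separately, your ergodicity argument only covers $p_1>0$. That is enough for the vectors $p^K$ used later, and more than the paper itself provides. When $p_1=0$ and $\gcd(2t+1,L)>1$, every return time is a multiple of that gcd. So $\mu_p$ has a nontrivial finite rotation factor, and $\tilde\nu\times\mu_p$ need not be ergodic (e.g.\ for a periodic-orbit $\tilde\nu$). That case is not handled by your route.
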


\begin{proof}
    First we prove Statement (1) of Proposition \ref{constrA}.

    If $ p = (1,0,...)$, then $Y = \{(...111...)\}$ is a full $\mu_p$ measure set. 
    Thus, $ \phi(\tilde x, \textbf y) = (\tilde x,\tilde x)$ for $(\tilde\nu \times \mu_{p})$-almost every $(\tilde x, \textbf y) \in \Sigma_{A_1}  \times \Sigma_2$.
    Hence, $\tilde\nu = (\pi_1)_*\tilde m = (\pi_2)_*\tilde m = \tilde\nu_p.$
    Therefore, $\nu = \gamma \tilde\nu = \gamma \tilde\nu_p = \mu_{\nu,  p}$.

    Next, we prove Statement (2) of Proposition \ref{constrA}.
To show this inequality, it is sufficient to consider the measure $\tilde m$ on $\Sigma_{A_1} \times \Sigma_{A_1}$, since it is a joining of $\tilde\nu$ and $\tilde\nu_p$, and apply Lemma \ref{dbarcont}. 
Thus, by Definition \ref{dbardef}, for 
\[U \coloneq \{(\tilde x, \tilde z) \in \Sigma_{A_1}\times \Sigma_{A_1} : \tilde x_1 \neq \tilde z_1\}, \]
we have that
\[\dbar (\tilde\nu, \tilde\nu_p) \leq \tilde m(U) = (\tilde\nu \times \mu_p)(\phi^{-1}(U)) \leq \mu_p(\pi_2(\phi^{-1}(U))).\]
Furthermore, if $\tilde x_1 \neq \tilde z_1$, then $\phi^{-1}(\tilde x,\tilde z) \in \Sigma_{A_1} \times [0]$.
Hence,
\[ \dbar(\tilde\nu, \tilde\nu_p) \leq \mu_p ([0]) = r \sum_{n=0}^{\infty}\sum_{k=n+1}^{\infty}\eta_*\lambda_p(\sigma^{-n}([0])\cap[10^{k-1}1]) = r\sum_{k=1}^{\infty}kp_{k+1} = r(\mathbb{E}(p)-1).\]
By \eqref{normalize}, $r=(\mathbb{E}(p))^{-1}$.

Finally, we prove Statement (3) of Proposition \ref{constrA}.
Let $\delta_k = b_k - b_{k-1}$, where $b_0=0$ and for $k\geq1$, $b_k$ is defined in \eqref{beak}. 

\begin{lemma}
    For all $\mu$, $\int b(x)\d \mu \geq \sum_{k=1}^{\infty}\delta_k \mu[w^k]$.
\end{lemma}

\begin{proof}
    Recall $\mathcal{A}$ is the alphabet for $\Sigma$.
    We have
\begin{equation}
    \mu_{\nu, p}([w^k]) -\mu_{\nu, p}([\omega^{k+1}]) \leq \sum_{j \in \mathcal{A} \setminus \{0\}}\mu_{\nu,p}([w^kj]).
\end{equation}
Thus,
\begin{align*}
    \sum_{k=1}^{\infty} \delta_k \mu_{\nu,  p}([w^k])  &= \sum_{k=1}^{\infty} \left( b_k\mu_{\nu,  p}([w^k])-b_{k-1}\mu_{\nu,  p}([w^k])\right) \\
    &=\sum_{k=1}^{\infty} b_k \left(\mu_{\nu,  p}([w^k])-\mu_{\nu,  p}([w^{k+1}])\right)  \\
    &\leq \sum_{j \in \mathcal{A}\setminus \{0\}}\sum_{k=1}^{\infty} b_k\mu_{\nu,  p}([w^kj]) \leq \int b(x)\d \pi_*\mu_{\nu,p}. \qedhere
\end{align*} 
\end{proof}
We will show, for $w$, the word that codes the periodic point,
\begin{equation}\label{dk}
    \sum_{k=1}^{\infty} \delta_k \mu_{\nu,  p}([w^k]) = \infty, 
\end{equation}
which by the following computation implies $\pi_*\mu_{\nu,  p} \in \N.$

To prove \eqref{dk}, we will need to know $\mu_p([0^{k}]).$
By the definition in \eqref{mup},
\begin{align*}
    r^{-1}\mu_p([0^{k}]) &= \sum_{s=0}^{\infty}\sum_{i = s+1}^{\infty}\eta_*\lambda_p(\sigma^{-s}([0^{k}]\cap[10^{i-1}1]) \\
    &= \sum_{s=1}^{\infty}\sum_{i = k+s}^{\infty}\eta_*\lambda_p([10^{i-1}1]) \\
    &= \sum_{s=1}^{\infty}sp_{k+s}.
\end{align*}

Since $\tilde\nu_p([\tilde  w^k]) = \phi_*(\tilde \nu \times \mu_p)([\tilde w^k])$, and $\Sigma_{A_1} \times[0^{k+2t}] \subset \phi^{-1}([\tilde w^k]),$ we have
\[\tilde\nu_p([\tilde w^k]) \geq \mu_p([0^{k+2t}]) = r \sum_{s=1}^{\infty}sp_{k+2t+s} \asymp r\sum_{s=1}^{\infty}s(s+k+2t)^{-3}. \] 
Furthermore,
\[r\sum_{s=1}^{\infty}s(s+k+2t)^{-3} \geq r\int_1^{\infty}x(x+k+2t)^{-3}\d x = \frac{r(2+k+2t)}{2(1+k+2t)^2} \geq \frac{r}{3(k+2t)}\]
for all $k \in \mathbb{N}$.
By Equation \eqref{gam}, for all $k \geq K$, 
\[\mu_{\nu,  p}([w^k]) \geq \frac{r}{n}\sum_{s=1}^{\infty}\frac{s}{(s+k+2t)^3} \geq \frac{r}{3(k+2t)n},\] so showing $\sum_{k=1}^{\infty}\delta_kk^{-1} = \infty$ will suffice.

By summation by parts, and letting $b_0=0$, the partial sums are given by 
\[S_i \coloneq\sum_{k=1}^i\delta_kk^{-1} = \frac{b_i}{i+1} + \sum_{k=1}^i\frac{b_k}{(k+1)\cdot k}.\]
Therefore, by Lemma \ref{bkexp}, since $b_k \geq ak$ for some $a > 0$, $\lim_{i \to \infty} S_i = \infty$, which implies $\pi_*\mu_{\nu, p} \in \N.$
\end{proof}

  \subsection{Construction B}\label{Cons:two} 

$\quad$
  We now show that if the symbol, $0$, the first symbol in the word $w$, which codes the right interval of the periodic point $c$, is safe (see Definition \ref{safe}) in $(\Sigma_{A_1}, \tilde \sigma)$, we can vary nonadapted invariant ergodic measures $\dbar$-continuously.
  By Lemma \ref{dbarcont}, we need only consider the ergodic invariant measures on $(\Sigma_{A_1}, \tilde\sigma)$. 
  Thus, it will be sufficient to assume $(\Sigma_A, \sigma)$ is mixing.
  To achieve continuity, we need only control the rate at which we overwrite symbols with $0$'s, as we will now show.
    Let $S \coloneq \{0,1\}$, so 
    \[P \coloneq \{{p} \in \mathbb{R}^2 : p_0 + p_1 = 1,  \quad p_i \geq 0 \}.\]
    For probability vectors ${p},q \in P$, we may define Bernoulli measures $\lambda_p$ and $\lambda_q$ on $\Lambda = \{0,1\}^{\mathbb{N}}$.
    These measures are ergodic with respect to the left shift $\sigma$ on $\Lambda$.
We define a function, $\psi$, from $\Sigma_A \times \Lambda \times \Lambda$ to $\Sigma_A \times \Sigma_A$ by
    \begin{equation}
        \psi(x,y,z) \coloneq (x\cdot y, x \cdot z),
    \end{equation}
    where $x \cdot y$ means pointwise multiplication.
    Note, by construction $\sigma \circ \psi = \psi \circ \sigma$.
   For $p \in P$,  and $\pi_1$ the projection onto the first coordinate of $\Sigma_A \times \Sigma_A$, let $\mu'_{\nu, p} \coloneq (\pi_1 \circ \psi)_*(\nu \times\lambda_p)$.
   Figure \ref{schemB} shows a schematic for Construction B.
\begin{lemma}
    If $\nu \in \M$, $\pi_*\nu \in \N$ and ${p} \in P$, then $\pi_*\mu'_{\nu, {p}} \in \N$.
\end{lemma}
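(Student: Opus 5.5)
The plan is to compare cylinder measures of $[w^k]$ under $\nu$ and $\mu'_{\nu,p}$, and to turn adaptedness into a condition on the sequence $k\mapsto\mu([w^k])$. Throughout I work in the reduced mixing setting described at the start of Construction B. There the overwriting symbol $0$ is safe, and I take the periodic word coding $c$ to be $w=0$ (a single symbol, as in Construction B). The map $x\mapsto x\cdot y$ then only replaces symbols by $0$. Since $0$ is safe, $x\cdot y\in\Sigma_A$ for every $x\in\Sigma_A$, so $\psi$ is well defined, measurable and equivariant.

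\textbf{Step 1: ergodicity.} The measure $\lambda_p$ is Bernoulli, hence weakly mixing, and $\nu$ is ergodic. So $\nu\times\lambda_p$ is ergodic for $\sigma\times\sigma$. Its factor image $\mu'_{\nu,p}=(\pi_1\circ\psi)_*(\nu\times\lambda_p)$ is therefore $\sigma$-invariant and ergodic, and $\pi_*\mu'_{\nu,p}$ is ergodic for $f$.

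\textbf{Step 2: monotonicity of cylinders.} If $x\in[0^k]$ then $x\cdot y\in[0^k]$ for every $y$, because pointwise multiplication cannot destroy a $0$. Hence
\[\mu'_{\nu,p}([w^k])=(\nu\times\lambda_p)\bigl(\{(x,y): x\cdot y\in[0^k]\}\bigr)\geq \nu([0^k])=\nu([w^k])\]
for all $k$.

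\textbf{Step 3: adaptedness as a summability condition.} I want to show that, for an invariant $\mu$ giving no mass to $c$, nonadaptedness is equivalent to $\sum_k\mu([w^k])=\infty$. The lemma inside the proof of Proposition~\ref{constrA}, together with Lemma~\ref{bkexp}, already gives one direction:
\[\int b\,\d\pi_*\mu\geq\sum_k\delta_k\mu([w^k]),\]
and summation by parts with $b_k\ge ak$ yields divergence whenever $\sum_k \mu([w^k])=\infty$.

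For the converse I need an upper bound $b(x)\leq a'k+C$ on $D_k\setminus D_{k+1}$. This comes from the fact that $f$ is piecewise $C^1$ on finitely many closed pieces, so $|f'|\le \Lambda$. A point whose coding leaves $[w^{k+1}]$ then satisfies $x-c\geq \ell'\Lambda^{-(k+1)N}$ for a fixed $\ell'>0$ (the length of the complement piece inside $(c,c+\ell)$ at the next step). Points outside $(c,c+\ell)$ contribute nothing to $\int b$, since $b$ vanishes there. Summing over levels then gives
\[\int b\,\d\pi_*\nu\leq\sum_k(a'k+C)\bigl(\nu[w^k]-\nu[w^{k+1}]\bigr)\lesssim \sum_k\nu([w^k])+C.\]

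\textbf{Step 4: conclusion.} The hypothesis $\pi_*\nu\in\N$ splits into two cases.
\begin{enumerate}
\item If $\pi_*\nu(\{c\})>0$, then by ergodicity $\nu$ is the point mass on $\ldots 000\ldots$. Then $\mu'_{\nu,p}=\nu$, and nothing needs to be proved.
\item Otherwise $\int b\,\d\pi_*\nu=\infty$, so Step 3 gives $\sum_k\nu([w^k])=\infty$. Step 2 gives $\sum_k\mu'_{\nu,p}([w^k])=\infty$, and the lower-bound direction of Step 3 gives $\int b\,\d\pi_*\mu'_{\nu,p}=\infty$.
\end{enumerate}
Together with Step 1, this shows $\pi_*\mu'_{\nu,p}\in\N$.

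The main obstacle is the upper bound in Step 3, which must be checked against the Markov geometry near $c$. I expect bounded distortion and the upper bound on $|f'|$ to suffice. If they do not, a fallback is to bypass Step 3 entirely. Since $\{0^\infty\}$ is the only way the orbit stays near $c$ from the right, one can try to show directly that the conditional distribution of $b$ on $[0]$ under $\mu'_{\nu,p}$ stochastically dominates that under $\nu$, because extending $0$-runs only moves points closer to $c$.
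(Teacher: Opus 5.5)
Your argument is correct, but it takes a different route from the paper.

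\textbf{The paper's route.} The paper works on the interval. It asserts that overwriting by $0$'s makes $\pi_*\mu'_{\nu,p}$ dominate $\pi_*\nu$ toward $c$, i.e.\ $\pi_*\mu'_{\nu,p}([0,x))\ge\pi_*\nu([0,x))$ for every $x$ (tacitly taking $c=0$). It then concludes in one line from the layer-cake formula $\int b\,d\mu=\int_0^\infty\mu(\{b>t\})\,dt$ and the monotonicity of $b$. Your fallback is essentially this argument.

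\textbf{What each route buys.} The paper's route needs no growth estimate on $b$ and no upper bound on $|f'|$. However, domination for arbitrary $x$ is only evident when the coding is order-preserving, as for the doubling map. With an orientation-reversing branch it can fail pointwise. For the tent map, $\pi([011])=[1/4,3/8]$ lies closer to $c=0$ than $\pi([010])=[3/8,1/2]$, so zeroing a symbol can move an individual point away from $c$. The comparison is then immediate only at the endpoints of the nested intervals $D_k$.

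You use exactly that robust part, $\mu'_{\nu,p}([0^k])\ge\nu([0^k])$, which holds whatever the branch orientations. You pay for it with the two-sided estimate that $b\circ\pi$ is comparable to $k$ on $[0^k]\setminus[0^{k+1}]$. Hence $\int b\,d\pi_*\mu<\infty$ if and only if $\sum_k\mu([0^k])<\infty$, for $\mu$ without an atom at $0^\infty$.

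\textbf{Details to tighten.} The upper half of your estimate needs $\Lambda=\sup|f'|<\infty$ on $I_0$, which is standard for maps that are $C^1$ on closed pieces. It is cleanest stated as follows: if $f^j(x)\in[c,c+\ell)$ for $j<k$ and $f^k(x)\ge c+\ell$, then $\ell\le f^k(x)-c\le\Lambda^k(x-c)$, so $b(x)\le k\log\Lambda-\log\ell$. This replaces your somewhat garbled ``$\ell'$'' description.

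Your proof also handles points the paper leaves implicit:
\begin{itemize}
\item ergodicity of $\mu'_{\nu,p}$, via weak mixing of $\lambda_p$;
\item admissibility of $x\cdot y$, via safeness;
\item the atomic case.
\end{itemize}
For the atomic case, add the one-line reason that $0^\infty$ is the only coding of $c$. Safeness gives $f(I_0)\supseteq I$. Since $f$ is increasing on $I_0$ with $f(c^+)=c$, this forces $c=0$, so no other partition interval contains $c$. Ergodicity then gives $\nu=\delta_{0^\infty}$, as you claim.
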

\begin{proof}
    Depending on ${p}$, the constructed measure $\mu'_{\nu, p}$ is more concentrated near $c$ than $\nu$. 
    That is, by descending to the interval, $\pi_*\mu'_{\nu, p}([0,x)) \geq \pi_*\nu([0,x))$.
    Since $b$ is monotonic, $[0,x) = \{y: b(y) > b(x)\}$.
    Thus, 
    \begin{align*}
    \int b(x) \d(\pi_*\mu'_{\nu, {p}})(x) &= \int_0^{\infty} \pi_*\mu'_{\nu, {p}}\left( \{y : b(y) > t\}\right) \d t \\
    &\geq \int_0^{\infty} \pi_*\nu\left( \{y : b(y) > t\}\right)\d t =\int b(x) \d(\pi_*\nu)(x) = \infty.
    \end{align*}
    Hence, $\pi_*\mu'_{\nu, {p}}$ is nonadapted.
\end{proof}

\begin{figure}\label{ConstructionB}
\begin{tikzcd}[row sep=tiny, cells={nodes={draw=gray}}]
\Lambda, \lambda_p \arrow[dr]\\
  & \Lambda \times \Lambda, \xi \arrow[dr]& & & \Sigma_A, \mu'_{\nu, p} \\
\Lambda, \lambda_q \arrow[ur]& & \Sigma_A \times \Lambda \times \Lambda, \nu \times \xi \arrow[r,"\psi"] & \Sigma_A \times \Sigma_A,m \arrow[ur, "\pi_1"] \arrow[dr, "\pi_2"]\\
& \Sigma_A, \nu \arrow[ur] & & & \Sigma_A, \mu'_{\nu, q}
\end{tikzcd}
\caption{Construction B}
\label{schemB}
\end{figure}
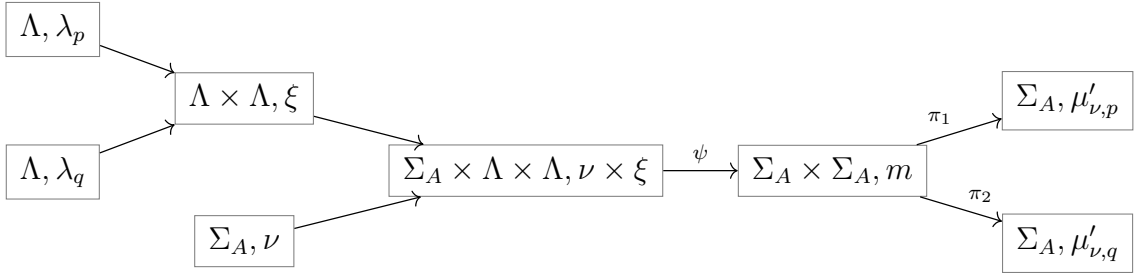

    Let $p,q \in P$.
    We now construct a joining for $\mu'_{\nu, p}$ and $\mu'_{\nu, q}$ to show $\dbar$-closeness.
    We first define a measure $\xi$ on $\Lambda \times \Lambda$.
    Let $M$ be the matrix defined by $M_{ij} = p_iq_j$ for $ i,j \in \{0,1\}$. 
    We will define a different set of values $\tilde{M} = \{p_i'q_j': i,j \in \{0,1\}\}$ that will result in a different Bernoulli measure. 
    Recalling the definition of $\dbar$-distance (\ref{dbardef}) we can reduce the distance by reducing off diagonal contributions.
    Thus, let us ``push" off diagonal contributions to the diagonal while keeping the projections fixed.
    Let $e \coloneq \min\{p_0q_1, p_1q_0\}$ and define $\tilde M_{ii} \coloneq M_{ii}+e$ and for $i \neq j \in \{0,1\}, \tilde M_{ij} \coloneq M_{ij}-e.$
    An example of this process is given as
\[ p = \left(\frac{1}{3}, \frac{2}{3}\right), \quad  q = \left(\frac{1}{4}, \frac{3}{4}\right), \quad 
  M= \frac{1}{12} \begin{bmatrix}
1 & 3 \\
2 & 6
\end{bmatrix}
\to 
\frac{1}{12}
\begin{bmatrix}
3 & 1 \\
0 & 8
\end{bmatrix}
= \tilde{M}.
\]
    
    Let $\xi$ be the Bernoulli measure on $\Lambda \times \Lambda$ defined by $\tilde{M}$.
    That is, for a cylinder of order $k$ in $\Lambda \times \Lambda$ of the form 
    \[ [w] = \{(y,z) \in \Lambda \times \Lambda : w_i = (y_i,z_i) \text{ for } 1 \leq i \leq k \},\]
    the measure is given by 
    \begin{equation} 
    \xi([w]) = \prod_{i = 1}^{k} \tilde{M}_{y_iz_i}.
    \end{equation}

    We now define $m$ on $\Sigma_A \times \Sigma_A$ by $m(A) \coloneq \psi_*(\nu \times \xi)$. That is, $(\nu \times \xi) (\psi^{-1}(A))$ for all Borel sets $A$.
    Because $\sigma^{-1}$ and $\psi^{-1}$ commute, $m$ is an ergodic invariant Borel probability measure on $\Sigma_A \times \Sigma_A$.
Let $U \subset \Sigma_A$ be measureable and 
\[B \coloneq \{(x,y) \in \Sigma_A \times \Lambda : (\pi_1 \circ\psi)(x,y,z) \in U  \text{ for some } z \in \Lambda\}.\]
Then, $(\pi_1)_*m = \mu'_{\nu,p}$ because  \[\mu'_{\nu, {p}}(U) = (\nu \times \lambda_p)(B) = (\nu \times \xi)(B \times \Lambda) = m(\pi_1^{-1}(U)).\]

\begin{prop}\label{constrB}
    For all $\nu \in \N$, ${p},  q \in P$ and $\mu'_{\nu,{p}}, \mu'_{\nu,  q} \in \N$ as defined above, we have $\dbar(\mu_{\nu,{p}}, \mu_{\nu,{q}}) \leq |p_0 - q_0|$.    
\end{prop}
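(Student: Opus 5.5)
The idea is to use the joining $m=\psi_*(\nu\times\xi)$ constructed just before the statement and bound $m(\{(x,x')\colon x_1\neq x'_1\})$ directly. By Definition \ref{dbardef}, $\dbar(\mu'_{\nu,p},\mu'_{\nu,q})$ is at most this quantity as soon as $m$ is a joining of the two measures.

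\textbf{Step 1: $m$ is a joining.} Invariance of $m$ under $\sigma\times\sigma$ follows from $\sigma\circ\psi=\psi\circ\sigma$ together with invariance of $\nu\times\xi$. For the marginals, the first-coordinate marginal of $\xi$ is $\lambda_p$: indeed $\tilde M_{00}+\tilde M_{01}=p_0q_0+p_0q_1=p_0$, since the $+e$ and $-e$ cancel along each row. The same cancellation along columns shows the second-coordinate marginal of $\xi$ is $\lambda_q$. Since $\xi$ is the Bernoulli (product) measure with one-site distribution $\tilde M$, these one-site identities give the marginals on every cylinder. The computation already displayed in the text then gives $(\pi_1)_*m=\mu'_{\nu,p}$, and the symmetric computation gives $(\pi_2)_*m=\mu'_{\nu,q}$.

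\textbf{Step 2: the key estimate.} If $\psi(x,y,z)=(x\cdot y,x\cdot z)$ has differing first coordinates, then $x_1y_1\neq x_1z_1$, which forces $y_1\neq z_1$. Therefore
\[
m(\{x_1\neq x'_1\})\le \xi(\{y_1\neq z_1\})=\tilde M_{01}+\tilde M_{10}=p_0q_1+p_1q_0-2e .
\]
Without loss of generality suppose $e=p_1q_0\le p_0q_1$. Then the right-hand side equals $p_0q_1-p_1q_0=p_0(1-q_0)-(1-p_0)q_0=p_0-q_0$. In the other case it equals $q_0-p_0$. In either case the bound is $|p_0-q_0|$, which yields the proposition.

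\textbf{Main obstacle.} The only subtle point is interpretive: the statement writes $\mu_{\nu,p}$ where $\mu'_{\nu,p}$ is meant, and the multiplication $x\cdot y$ must be read as the overwriting of symbols by $0$ when $y_i=0$. I would make this explicit, so that $(x\cdot y)_i$ depends only on $x_i$ and $y_i$ and in particular agrees with $(x\cdot z)_i$ whenever $y_i=z_i$. The nonadaptedness hypothesis $\nu\in\N$ plays no role in the distance bound; it only guarantees, via the preceding lemma, that both measures lie in $\N$.
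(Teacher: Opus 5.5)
Your proposal is correct and follows the paper's proof: both use the joining $m=\psi_*(\nu\times\xi)$, note that $x_1y_1\neq x_1z_1$ forces $y_1\neq z_1$, and bound the mismatch probability by $\tilde M_{01}+\tilde M_{10}=|p_0q_1-p_1q_0|=|p_0-q_0|$. You also check that both one-site marginals of $\tilde M$ are $p$ and $q$, so that $m$ really is a joining of $\mu'_{\nu,p}$ and $\mu'_{\nu,q}$; the paper verifies only the first marginal.
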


\begin{proof}
  By Definition \ref{dbardef}, for $U \coloneq \{(y,z) : y_1 \neq z_1 \}$, $\dbar(\mu_{\nu,{p}}, \mu_{\nu,  q}) \leq m(U).$
    We have $m (U) = (\nu\times \xi)(\psi^{-1}(U)).$
    Let $E_{ij} \coloneq [i]\times [j]. $
    If $(y,z) \in U$, then \[\psi^{-1}(y,z) \in (\Sigma_A \times E_{10})\cup (\Sigma_A \times E_{01}).\]
    Thus, 
    \[m(U) \leq \xi(E_{10} \cup E_{01}) = \xi(E_{10})+\xi(E_{01}) = \tilde M_{10}+\tilde M_{01} = |p_0q_1-p_1q_0| = |p_0-q_0|.\]
\end{proof}

\section{Proofs}\label{Proofs}

\subsection{Proof of Theorem \ref{thm:one}}\label{One:Proof}

\quad

Equipped with the previous constructions, we are ready to prove our result.

\subsubsection{Proof of Statement (1) and (2) of Theorem \ref{thm:one}}

\quad

First we prove $\N$ is $\dbar$-dense.
As described in Section \ref{coding}, the Markov partition for $(I,f)$ can be used to obtain a coding with the shift space $(\Sigma^+, \sigma)$.
Recall that $f\colon I \to I$ is a uniformly expanding map. 
By Proposition \ref{constrA}, if $p_{k} \asymp k^{-3}$, then $\pi_*\mu_{\nu,  p} \in \N.$
As described in Section \ref{coding}, there is a correspondence of positive entropy invariant measures between $\mathcal{M}(\Sigma^+,\sigma)$ and $\mathcal{M}(I,f)$.
Thus, we will show $\N$ is $\dbar$-dense in $\M(I,f)$ by proving the set of measures that push down to $\N$ is $\dbar$-dense in $\M(\Sigma^+,\sigma)$.
Let $\nu \in \M(\Sigma^+, \sigma)$ and let $\epsilon >0$.
We will construct a $\mu \in \N$ such that $\dbar(\nu, \mu) < \epsilon$.
By the construction in Subsection \ref{Cons:one}, 
if we choose a probability vector, $ p$ on $\mathbb{N}$ correctly, then we can let $\mu = \mu_{\nu, {p}}.$

Let the probability vector ${p}^K$, for a given $K \in \mathbb{N}$, have $K-1$ consecutive zeros. 
That is, 
\begin{equation}
{p}^K = (r,0,0,...0, r(K+1)^{-3}, r(K+2)^{-3}, ...),
\end{equation}
where $0<r<1$ is a normalizing constant, and $p_i = 0$ for $2 \leq i \leq K$.
Then, $\sum_ip^K_{i} = r(1 + \sum_{i=K+1}^{\infty}i^{-3}) = 1$ implies $r = (1 + \sum_{i=K+1}^{\infty}i^{-3} )^{-1} $.

Hence, $\mathbb{E}(p^K) = r(1 + \sum_{i=K+1}^{\infty}i^{-2}).$
Therefore, since $r \to 1$ as $K \to \infty$, we have $\lim_{K \to \infty} \frac{\mathbb{E}(p^K)-1}{\mathbb{E}(p^K)} =0.$
Let $K \in \mathbb{N}$ be chosen such that $\frac{\mathbb{E}(p^K)-1}{\mathbb{E}(p^K)}<\epsilon.$
Thus, by Proposition \ref{constrA}, and for $\mu = \mu_{\nu,  p}$, we have $\dbar(\nu, \mu) < \epsilon$ and $\mu \in \N$.

Next, we prove $\N$ is a weak$^*$ $G_{\delta}$ set.
Recall $\ell$ is the length of the interval coded by $w_1=0$.
    For all $m \in \mathbb{N}$ such that $m^{-1} \leq \ell$, 
    let $b_m \in C_{pw}(I)$ be defined by
    \begin{equation}\label{tildemap}
\begin{cases} 
      b_m(x) = \log(m) & x \in [c, c+\frac{1}{m}], \\
       b_m(x) = -\log(x-c) & x \in [c+\frac{1}{m},c+\ell), \\
       b_m(x) = 0 & \text{ otherwise}.
   \end{cases}
\end{equation}
Thus, $b_m \nearrow b$ pointwise and $b_m \geq 0$, so $\int b_m\d \mu \nearrow \int b \d \mu$ pointwise.
Also, let us define for all $j \in \mathbb{N}$,
\begin{equation}
    N_j \coloneq \Big\{ \mu \in \mathcal{M} :\text{ there exists an } m \in \mathbb{N} \text{ such that } \int_{I}b_m(x)d\mu(x) >j\Big\}.
\end{equation}
Note that $\bigcap_{j \in \mathbb{N}}N_j = \N.$ 

Also, since $\N$ is $\dbar$-dense and $\N \subset N_j$ for all $j \in \mathbb{N}$, each $N_j$ is $\dbar$-dense and thus weak$^*$-dense.
To finish the proof, we show each set $N_j$ is weak$^*$-open.
Let $\mu \in N_j$ and $\mu_i$ be a sequence of measures in $\mathcal{M}$ converging to $\mu$ in the weak$^*$-topology.
Then, since each $b_m$ is continuous, there exists an $ m \in \mathbb{N}$ such that 
\[\lim_{i \to \infty}\int_I b_m(x)\d\mu_i(x) = \int_I b_m(x)\d\mu(x) > j.\]
Thus, there exists a $k \in \mathbb{N}$ such that for all $i>k$, we have $\int_I b_m(x)\d\mu_i(x) > j$.
Hence, $\mu_i \in N_j$ for all $i >k$, so $N_j$ is weak$^*$-open, which implies $N_j$ is $\dbar$-open.

By \cite{Sig}, $\M$ is weak$^*$-residual in $\mathcal{M}$, the periodic measures are weak$^*$-dense in $\mathcal{M}$, and we have that $\M$ is entropy dense.
Thus, since $\N$ is $\dbar$-dense in $\M$, $\N$ is weak$^*$-dense and entropy dense in $\mathcal{M}$.
Since we proved $\N$ is a weak$^*$-$G_{\delta}$ set, $\N$ is weak$^*$-residual in $\mathcal{M}$.

\subsubsection{Proof of Statement (3) of Theorem \ref{thm:one}}
It suffices to exhibit for each $\pi_*\nu \in \N$ a $\dbar$-continuous map $\tau \colon[0,1] \to \N$, $r \mapsto \mu_{r}$ such that $\mu_0 = \pi_*\nu$ and $\mu_1 = \delta_c$, the delta measure at $c$.
    
   Let ${p} = (1-r,r)$ and let $\mu_{r} = \mu'_{\nu,  p}$ as given in Construction B. 
   Therefore, if $|r_1-r_2|<\delta$, then we have, by Proposition \ref{constrB}, $\dbar(\mu_{r_1}, \mu_{r_2}) \leq \delta$.
    Note that $\mu_0=\nu$.
    Let $\tau \colon I \to \mathcal{N}$ be defined by $\tau(r) = \mu_{r}$.
    Thus, $\mu$ is connected to $\delta_c$ by the path $\tau(I)$.

\subsubsection{We also have:}

 For every $h' \in [0,h_{top})$ there is a $\mu \in \N$ such that $h(\mu) =h'$.

\begin{proof}
    Let $h' \in [0,h_{top})$, $\mu_E$ be the MME, and let $0<\epsilon< h_{top}-h'.$
    Let $h \colon \N \to [0,h_{top}]$ be the entropy map.
    Since $\mathcal{M}$ is entropy dense, there exists a $\mu_{\epsilon} \in \N$ such that $h(\mu_{\epsilon}) \in (h_{top} - \epsilon, h_{top})$.
    That is, $\dbar(\mu_E, \mu_{\epsilon})< \epsilon.$
    By Statement 2, there exists a path $\tau \colon [0,1] \to \N$ such that $\tau(0) = \delta_c$ and $\tau(1) = \mu_{\epsilon}$.
    Thus, $h \circ \tau \colon [0,1] \to [0, h_{top})$ is continuous.
    Hence, by the intermediate value theorem, there exists a $s \in [0,1]$ such that $h \circ \tau (s) = h'$.
    Therefore, $\mu = \tau(s)$ has entropy exactly $h'$.
    
\end{proof}

\bibliographystyle{alpha}
\bibliography{Nonadapted} 

\end{document}